\newcommand{\norm}[1]{\left\Vert#1\right\Vert}
\newcommand{\scal}[1]{\left<#1\right>}
\newtheorem{theorem}{Theorem}[section]%[chapter]
\newtheorem{corollary}[theorem]{Corollary}
\newtheorem{remark}[theorem]{Remark}
\newcommand{\V}{\mathcal{V}} %n
\newcommand{\Vs}{\mathcal{V}}
\newcommand{\N}{\mathbb{N}}
\newcommand{\R}{\mathbb{R}}      \newcommand{\Rd}{\mathbb{R}^d}
\newcommand{\C}{\mathbb{C}}      \newcommand{\Cd}{\mathbb{C}^{d}}
\newcommand{\F}{\mathcal{F}}
      	  \newcommand{\lrd}{L^{1}(\Rd)}    
      \newcommand{\ldrd}{L^{2}(\Rd)}   \newcommand{\ldrdd}{L^{2}(\Cd)}
	   \newcommand{\Srd}{\mathcal{S}(\Rd)} \newcommand{\Scd}{\mathcal{S}(\Cd)}
\newcommand{\bz}{\overline{z}}
\begin{document}

\title{Complex Hermite functions as Fourier-Wigner transform}
\author{F. Agorram}     %\ead{}
\author{A. Benkhadra}   %\ead{}
\author{A. El Hamyani} %\ead{amalelhamyani@gmail.com}
\author{A. Ghanmi}     %\ead{ag@fsr.ac.ma}
 \address{P.D.E. and Spectral Geometry,
          Laboratory of Analysis and Applications - URAC/03,
          Department of Mathematics, P.O. Box 1014,  Faculty of Sciences,
          Mohammed V University of Rabat, Morocco}
\maketitle

\begin{abstract}
We prove that the complex Hermite polynomials $H_{m,n}$ on the complex plane $\C$ can be realized as the Fourier-Wigner transform $\V$ of the well-known real Hermite functions $h_n$ on real line $\R$. This reduces considerably the Wong's proof \cite[Chapter 21]{Wong} giving the explicit expression of $\V(h_m,h_n)$ in terms of the Laguerre polynomials. Moreover, we derive a new generating function for the $H_{m,n}$ as well as some new integral identities.
\bigskip

%\begin{keywords}
%Complex disk polynomials; Operational formulae; Nielsen's identities; Runge's addition formula; Generating functions
%\end{keywords}

%\begin{classcode}  33C45; 42C05; 58C40 \end{classcode} %\bigskip ùMathematics Subject Classification 2000:
\end{abstract}

\section{Introduction} \label{s1}

The so-called Fourier-Wigner transform is the windowed Fourier transform defined by
\begin{align}\label{FWT}
 \V(f,g)(p,q)=\left(\frac{1}{2\pi}\right)^{\frac{d}{2}}   \int_{\Rd}  e^{i \scal{x+\frac{p}{2},q}} f(x+p) \overline{g(x)} dx
\end{align}
for every $(p,q)\in \Rd \times \Rd$ and every complex-valued functions $f,g \in \ldrd$.
%is important in harmonic analysis, signal analysis, engineering, and the physical sciences.
This transform is a basic tool to study the Weyl transform \cite{Folland,Thangavelu,Wong}
and to interpret quantum mechanics as a form of nondeterministic statical dynamics \cite{Moyal}.
It is also used to study the nonexisting joint probability distribution of positioned momentum in a given state \cite{Wong}.
In addition, the transform $\V$ leads to the well-known Segal-Bargmann transform for a special window function  \cite{Folland,Thangavelu93}.
As basic property of this transform, one can use it to construct orthonormal bases for the Hilbert space $L^2(\Cd)$ from orthonormal bases
of $L^2(\Rd)$. % thanks to Moyal's formula.

In the present paper, we provide a new application of the Fourier-Wigner transform in the context of the complex Hermite polynomials $H_{m,n}$
\cite{Gh13ITSF,Ismail13b,DunklXu14}.
More precisely, we realize $H_{m,n}(z;\bar z)$ as the Fourier-Wigner transform of the well-known real Hermite
functions $h_n$ on $\R$.
This reduces considerably the Wong's proof \cite[Chapter 21]{Wong} giving the explicit expression of $\V(h_n,h_m)$ in terms of the Laguerre polynomials.
Moreover, we derive a new generating function for the complex Hermite polynomials $H_{m,n}$ as well as some new identities in the context of integral calculus.

The basic topics that we need in Fourier-Wigner transform, and in real and complex Hermite polynomials are collected in Section 2 and Section 3, respectively.
In Section 4, we state and prove our main results. We end the paper with some concluding remarks.

\section{The Fourier-Wigner transform} \label{s2}

The Fourier-Wigner transform  $\V: (f,g) \longmapsto  \V(f,g)$, given through \eqref{FWT}, is a well defined bilinear mapping
on $\ldrd \times \ldrd$, with
$$|\V(f,g)(p,q)|  \leq \left(\dfrac{1}{2\pi}\right)^{\frac{d}{2}} \norm{f}_{\ldrd} \norm{g}_{\ldrd}$$
for every $(f,g)\in \ldrd \times \ldrd$ and $(p,q)\in\Rd\times\Rd$.
It can be realized as a Fourier transform defined on $\Rd$ by
$$
\F(f)(\xi):=\left(\frac{1}{2\pi}\right)^{\frac{d}{2}} \int_{\Rd} e^{- i \scal{y,\xi}}f(y)dy.
$$
In fact, we have $ \V(f,g)(p,q) =   \F(K_{f,g}(\cdot|p))(- q),$
where the function $y \longmapsto K_{f,g}(y|p)$ belonging to $\lrd$ is defined on $\Rd$  by
\begin{align}\label{fct:Kfg}
 K_{f,g}(y|p)=f\left(y+\frac{p}{2}\right)\overline{g\left(y-\frac{p}{2}\right)}
\end{align}
for every given $f,g \in \ldrd$ and fixed $p\in\Rd$.
 More explicitly,
\begin{align}\label{Exp:Vs}
\V(f,g)(p,q)=\left(\frac{1}{2\pi}\right)^{\frac{d}{2}}
\int_{\Rd} e^{i \scal{y,q}}f\left(y+\frac{p}{2}\right)\overline{g\left(y-\frac{p}{2}\right)}dy.
\end{align}
An interesting result for $\V$ is the Moyal's formula
$$\scal{\V(f,g),\V(\varphi,\psi)}_{L^{2}(\Cd)}= \scal{f,\varphi}_{\ldrd} \scal{\psi,g}_{\ldrd}; \quad f,g,\varphi,\psi \in \ldrd,$$
giving rise to
$$\norm{\V(f,g)}^{2}_{L^{2}(\Cd)}= \norm{f}^{2}_{\ldrd}\norm{g}^{2}_{\ldrd}.$$
 Subsequently, we have $\V(\ldrd \times \ldrd) \subset L^{2}(\Cd).$
Moreover, $ \V(\Srd\times \Srd) \subset \Scd$ for every $f$ and $g$ in the Schwartz space $\Srd$.

Using Moyal's formula and, it can be shown \cite{Wong} that the Fourier-Wigner $\V$ can be used to construct orthogonal bases of $\ldrdd$ from those of $\ldrd$.
More precisely, if $\{\varphi_{k},k\in\N\}$ is an orthonormal basis of $\ldrd$,
then  $\{\varphi_{jk}=\V(\varphi_{j},\varphi_{k}); \, j,k\in\N\}$ is an orthonormal basis of $\ldrdd$ with
$$
\norm{ \varphi_{jk} }_{\ldrdd}  =  \norm{\varphi_{j}}_{\ldrd}^2\norm{\varphi_{k}}_{\ldrd}^2.
$$

\section{Real and complex Hermite polynomials} \label{s3}

The classical real Hermite polynomials $H_n(x)$ are defined by the Rodrigues' formula
$$
H_{n}(x)=(-1)^{n}e^{x^{2}}\dfrac{d^{n}}{dx^{n}}(e^{-x^{2}}),
$$
or also by their generating function
\begin{align}\label{GenFhn}
\sum_{m=0}^{+\infty}\dfrac{H_{m}(x)}{m!}t^{m}=e^{-t^{2}+2xt}.
\end{align}
Associated to $H_n$ they are the Hermite functions
$$
h_{n}(x)=e^{-\frac{x^{2}}{2}}H_{n}(x),
$$
which constitute an orthogonal basis of the Hilbert space $L^{2}(\R)$.
An interesting result for the Hermite functions is the Mehler's formula \cite{Szego75,Ismail2014},
\begin{align}\label{Mehler}
\sum_{m=0}^{+\infty}  \frac{\lambda^m}{2^{m} m!} h_{m}(x) h_{m}(y) = g(x,y|\lambda)
\end{align}
 fulfilled for $|\lambda|<1$, where
\begin{align}\label{Mehlerfctg}
 g(x,y|\lambda) = \frac{1}{\sqrt{1-\lambda^2}}
 \exp\left( -\frac{1+\lambda^2}{2(1-\lambda^2)}  (x^2+y^2) +   \frac{2\lambda}{1-\lambda^2} xy \right).
\end{align}

 An extension of $H_{m}(x)$ to the complex variable are the so-called complex Hermite polynomials $H_{m,n}(z,\bz )$
 introduced by It\^o \cite{Ito51} in the context of complex Markov process.
 They can be defined by means of their generating function \cite{Gh13ITSF}
\begin{align}\label{GenHmn}
 \sum_{m,n=0}^{+\infty}\frac{u^{m}}{m!}\frac{v^{n}}{n!}H_{m,n}(z,\overline{z})
 = e^{ - uv + z u + \overline{z} v}.
 \end{align}
 The explicit expression of $H_{m,n}$ in terms of the generalized Laguerre polynomials (\cite{Szego75})
 $L^{(\alpha)}_n(x)$ is given by \cite{IntInt06,Gh13ITSF}
       \begin{align}\label{HmnLaguerre}
             H_{m,n}(z,\bz ) =  (-1)^{\min(m,n)} (\min(m,n))! |z|^{|m-n|} e^{i(m-n)\arg(z)}  L^{(|m-n|)}_{\min(m,n)}(|z|^2)
       \end{align}
with $z=|z|e^{i\arg(z)}$.
The polynomials $H_{m,n}$ constitute a complete orthogonal system of the Hilbert space $L^2(\C;e^{-|z|^2}d\lambda)$ and
 appear naturally when investigating the eigenvalue problem of some second order differential
 operators of Laplacian type \cite{Shigekawa87,Matsumoto96,GhInJMP}.

Several interesting features of $H_{m,n}$ in connection with singular values
 of Cauchy transform \cite{IntInt06}, coherent states theory
 \cite{AliBagarelloHonnouvo10,AliBagarelloGazeau13},
 combinatory \cite{Ismail13a,Ismail13b} and signal processing
 \cite{RaichZhou04,DallingerRuotsalainenWichmanRupp10} have been studied recently.
 In the next section, we realize $H_{m,n}(z;\bar z)$ as the Fourier-Wigner transform of the well-known real Hermite
functions $h_n$ on $\R$, and derive interesting identities of these polynomials.

\section{Main results} \label{s4}

In this section, we provide new applications of the Fourier-Wigner transform and see how it turns up in the context of complex Hermite polynomials.
% we discuss some feature of  the Fourier-Wigner transform and see how it turns up in the theory complex Hermite polynomials.
The first main result gives the explicit expression of $\Vs(h_{m},h_{n})$ in terms of the
complex Hermite polynomials $H_{m,n}$. Namely, we assert

\begin{theorem}\label{thm:hVH}
For every $p,q\in\R$, we have
\begin{align}\label{hVH}
\Vs(h_{m},h_{n})(p,q) = (-1)^{n}\sqrt{2}^{m+n-1}e^{-\frac{p^{2}+q^{2}}{4}} H_{m,n}\left(\frac{p+iq}{\sqrt{2}},\frac{p-iq}{\sqrt{2}}\right).
\end{align}
\end{theorem}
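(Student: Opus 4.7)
The plan is to prove the identity via bilinear generating functions, comparing both sides after multiplying by $u^m v^n /(m! n!)$ and summing over $m,n$. Since $\Vs$ is linear in the first slot and conjugate-linear in the second, and since the estimate $|\V(f,g)(p,q)|\leq (2\pi)^{-d/2}\|f\|_{L^2}\|g\|_{L^2}$ legitimises the exchange of sum and integral (once we convert everything into $L^2$-convergent expansions via Mehler), we have
\begin{align*}
S(u,v):=\sum_{m,n\geq 0}\frac{u^{m}v^{n}}{m!\,n!}\Vs(h_{m},h_{n})(p,q)
=\Vs\!\left(\sum_{m}\tfrac{u^{m}}{m!}h_{m},\;\sum_{n}\tfrac{\bar{v}^{\,n}}{n!}h_{n}\right)(p,q).
\end{align*}
By \eqref{GenFhn} and $h_n=e^{-x^2/2}H_n$, the first factor equals $F_u(x)=e^{-x^{2}/2-u^{2}+2xu}$ and the second $G_v(x)=e^{-x^{2}/2-\bar v^{2}+2x\bar v}$, whose conjugate is $e^{-x^{2}/2-v^{2}+2xv}$ (real parameters aside).

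Next I would just substitute into formula \eqref{Exp:Vs} in dimension $d=1$, so that
\begin{align*}
S(u,v)=\frac{1}{\sqrt{2\pi}}\int_{\R}e^{iyq}F_u\!\left(y+\tfrac{p}{2}\right)\overline{G_v\!\left(y-\tfrac{p}{2}\right)}dy,
\end{align*}
collect all exponents into a single quadratic polynomial in $y$, and evaluate the resulting Gaussian integral. After completing the square in $y$ and cancelling the $u^{2}$ and $v^{2}$ terms against those arising from $(u+v+iq/2)^{2}$, the outcome should simplify to
\begin{align*}
S(u,v)=\frac{1}{\sqrt{2}}\,e^{-(p^{2}+q^{2})/4}\,\exp\!\Big(2uv+u(p+iq)-v(p-iq)\Big).
\end{align*}

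The final step is to recognise the right-hand side as a reparametrised form of \eqref{GenHmn}. Writing $z=(p+iq)/\sqrt{2}$ and substituting $(u,v)\mapsto(\sqrt{2}u,-\sqrt{2}v)$ in \eqref{GenHmn} turns the exponential factor into exactly the one above, and the $(-\sqrt{2}v)^{n}$ produces the factor $(-1)^{n}\sqrt{2}^{\,n}$ while the $(\sqrt{2}u)^{m}$ produces $\sqrt{2}^{\,m}$; combined with the prefactor $1/\sqrt{2}$ this yields the asserted $(-1)^{n}\sqrt{2}^{\,m+n-1}$. Matching coefficients of $u^m v^n/(m!n!)$ on both sides of the resulting identity gives \eqref{hVH}.

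The main obstacle is purely bookkeeping: getting the conjugation in the second slot of $\Vs$ right (it is this that forces the $\bar v$ in the series and ultimately produces the $(-1)^{n}$), and tracking the factors of $\sqrt{2}$ arising from the argument rescaling $z=(p+iq)/\sqrt{2}$. The Gaussian integral itself and the algebraic simplification of the quadratic exponent are routine, and the analytic justification for interchanging sums and integrals is standard because the Mehler-type series $\sum u^{m}h_{m}(x)/m!$ converges in $L^{2}(\R)$ for every $u\in\C$.
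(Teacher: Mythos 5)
Your proposal is correct and follows essentially the same route as the paper: sum $\sum_{m,n}\frac{u^m v^n}{m!n!}\Vs(h_m,h_n)(p,q)$ using the Hermite generating function \eqref{GenFhn}, evaluate the resulting Gaussian integral to obtain $\frac{1}{\sqrt{2}}e^{-(p^2+q^2)/4}e^{2uv+(p+iq)u-(p-iq)v}$, and match this against \eqref{GenHmn} under $(u,v)\mapsto(\sqrt{2}u,-\sqrt{2}v)$ with $z=(p+iq)/\sqrt{2}$. Your intermediate closed form and the bookkeeping of the $(-1)^n$ and $\sqrt{2}^{m+n-1}$ factors agree with the paper's computation.
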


\begin{proof}
Note first that by making use of the generating function \eqref{GenFhn} for the real Hermite polynomials $H_{m}$, we get easily that
\begin{align}\label{GenFhnhm} \sum_{m,n=0}^{+\infty}
\frac{u^{m}}{m!} \frac{v^{n}}{n!} H_{m}\left(y+\frac{p}{2}\right) H_{n}\left(y-\frac{p}{2}\right)
= e^{-(u^2+v^2)+2y(u+v) +p(u-v)} .
\end{align}
On the other hand, according to the expression of $\Vs$ given by \eqref{Exp:Vs}, we can write
\begin{align*}
 \Vs(h_{m},h_{n})(p,q)&=\dfrac{1}{\sqrt{2\pi}}\int_{-\infty}^{+\infty}e^{iyq} e^{-\frac{(y+\frac{p}{2})^{2}}{2}}e^{-\frac{(y-\frac{p}{2})^{2}}{2}} H_{m}\left(y+\frac{p}{2}\right)H_{n}\left(y-\frac{p}{2}\right)dy\\
 &=\dfrac{e^{-\frac{p^{2}}{4}}}{\sqrt{2\pi}}  \int_{-\infty}^{+\infty}e^{-y^{2}+iyq} H_{m}\left(y+\frac{p}{2}\right)H_{n}\left(y-\frac{p}{2}\right) dy.
\end{align*}
By means of \eqref{GenFhnhm}, we obtain
\begin{align*}
\sum_{m,n=0}^{+\infty}\frac{u^{m}}{m!}\frac{v^{n}}{n!} \Vs(h_{m},h_{n})(p,q)
=\dfrac{e^{-\frac{p^{2}}{4}}}{\sqrt{2\pi}}  e^{-(u^2+v^2) + p(u-v)}
\int_{-\infty}^{+\infty} e^{-y^2 + (iq+2(u+v))y } dy.
\end{align*}
More explicitly, we have
\begin{align}\label{Gen}
\sum_{m,n=0}^{+\infty}\frac{u^{m}}{m!}\frac{v^{n}}{n!} \Vs(h_{m},h_{n})(p,q)
=\dfrac{e^{-\frac{p^{2}+q^{2}}{4}}}{\sqrt{2}}  e^{2uv +(p+iq)u - (p-iq)v}
\end{align}
which follows by applying the well-known expression of the classical Gauss integral given by
\begin{align}\label{Gauss}
 \int_{\R} e^{-\alpha y^2 + \beta y } dy = \left(\frac{\pi}{\alpha}\right)^{1/2} e^{\beta^2/4\alpha}
\end{align}
with $\alpha>0$ and $\beta\in\C$. Now, by setting  $z=p+iq$, we get
\begin{align}\label{Gen}
\sum_{m,n=0}^{+\infty}\frac{u^{m}}{m!}\frac{v^{n}}{n!} \Vs(h_{m},h_{n})(p,q)
=\dfrac{e^{-\frac{|z|^{2}}{4}}}{\sqrt{2}}  e^{2uv + z u - \overline{z} v}.
\end{align}
In the right-hand-side of the last equality, we recognize the generating function \eqref{GenHmn} of the complex Hermite polynomials
$H_{m,n}$. Therefore,
\begin{align*}
\sum_{m,n=0}^{+\infty}\frac{u^{m}}{m!}\frac{v^{n}}{n!} \Vs(h_{m},h_{n})(p,q)
&=\dfrac{e^{-\frac{|z|^{2}}{4}}}{\sqrt{2}}
\sum_{m,n=0}^{+\infty}
\frac{(\sqrt{2}u)^{m}}{m!}\frac{(-\sqrt{2}v)^{n}}{n!}
H_{m,n}\left(\frac{z}{\sqrt{2}},\frac{\overline{z}}{\sqrt{2}}\right)\\
&=\dfrac{e^{-\frac{|z|^{2}}{4}}}{\sqrt{2}}
\sum_{m,n=0}^{+\infty} \frac{u^{m}}{m!}\frac{v^{n}}{n!}
 \left[(-1)^{n} \sqrt{2}^{m+n}H_{m,n}\left(\frac{z}{\sqrt{2}},\frac{\overline{z}}{\sqrt{2}}\right)\right].
\end{align*}
By identifying the two power series, we obtain \eqref{hVH}.
\end{proof}

The special case of $p=0$ in \eqref{hVH} yields the following

\begin{corollary} For every $t\in\R$, we have
\begin{align}\label{fhnhm}
\int_{-\infty}^{+\infty} H_{m}(y) H_{n}(y) e^{-y^2 - i t y} dy
%[\F(e^{-x^2}H_{m}H_{n})](t)
 = (-1)^{n} \sqrt{\pi}\sqrt{2}^{m+n} e^{-\frac{t^{2}}{4}} H_{m,n}\left(-\frac{it}{\sqrt{2}},\frac{it}{\sqrt{2}}\right).
\end{align}
\end{corollary}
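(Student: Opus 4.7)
The plan is to derive the corollary as a direct specialization of Theorem~\ref{thm:hVH} at $p=0$, using the integral expression \eqref{Exp:Vs} of the Fourier--Wigner transform on the left-hand side.

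First, I would substitute $p=0$ and $q=-t$ into the explicit formula \eqref{Exp:Vs}. Since the real Hermite functions satisfy $h_n(y) = e^{-y^2/2}H_n(y)$ with $H_n$ real-valued, the two Gaussian factors from $h_m(y+p/2)$ and $\overline{h_n(y-p/2)}$ collapse to a single $e^{-y^{2}}$, giving
\begin{equation*}
\Vs(h_{m},h_{n})(0,-t) \;=\; \frac{1}{\sqrt{2\pi}}\int_{-\infty}^{+\infty} e^{-iyt}\, e^{-y^{2}}\, H_{m}(y)\, H_{n}(y)\, dy.
\end{equation*}

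Next, I would evaluate the same quantity using Theorem~\ref{thm:hVH} with $p=0$, $q=-t$, which yields
\begin{equation*}
\Vs(h_{m},h_{n})(0,-t) \;=\; (-1)^{n}\,\sqrt{2}^{\,m+n-1}\, e^{-\frac{t^{2}}{4}}\, H_{m,n}\!\left(-\frac{it}{\sqrt{2}},\frac{it}{\sqrt{2}}\right).
\end{equation*}

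Equating these two expressions and multiplying through by $\sqrt{2\pi}$, the prefactor becomes $\sqrt{2\pi}\cdot\sqrt{2}^{\,m+n-1} = \sqrt{\pi}\cdot\sqrt{2}^{\,m+n}$, which matches the target formula \eqref{fhnhm} exactly. There is no real obstacle here; the only care needed is the bookkeeping of signs (choosing $q=-t$ rather than $q=+t$ to produce the $e^{-ity}$ appearing in the statement) and the elementary simplification $\sqrt{2\pi}\cdot\sqrt{2}^{\,m+n-1} = \sqrt{\pi}\sqrt{2}^{\,m+n}$ to absorb the extra $\sqrt{2\pi}$ coming from the normalization of $\V$.
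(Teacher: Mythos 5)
Your proposal is correct and is exactly the paper's intended derivation: the paper states the corollary as ``the special case of $p=0$'' in \eqref{hVH}, and your substitution $p=0$, $q=-t$ into \eqref{Exp:Vs} and \eqref{hVH}, together with the bookkeeping $\sqrt{2\pi}\cdot\sqrt{2}^{\,m+n-1}=\sqrt{\pi}\,\sqrt{2}^{\,m+n}$, fills in the (omitted) details faithfully, including the correct sign choice needed to produce $e^{-ity}$ and the arguments $\left(-\tfrac{it}{\sqrt{2}},\tfrac{it}{\sqrt{2}}\right)$.
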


While when specifying $q=0$, we can deduce the following

\begin{corollary} For every $t\in\R$, we have
\begin{align}\label{inthnhm}
\int_{-\infty}^{+\infty} H_{m}\left(y+\frac{t}{2}\right) H_{n}\left(y-\frac{t}{2}\right) e^{-y^2} dy = (-1)^n\sqrt{\pi}\sqrt{2}^{m+n}
H_{m,n}\left(\frac{t}{\sqrt{2}},\frac{t}{\sqrt{2}}\right).
\end{align}
\end{corollary}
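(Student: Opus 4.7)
The plan is to obtain \eqref{inthnhm} by equating two distinct expressions for $\Vs(h_m,h_n)(t,0)$: the integral representation provided by \eqref{Exp:Vs} and the closed form given by Theorem \ref{thm:hVH}. Because the theorem has already been established, no new computation of the Wigner-type transform is required; the corollary is essentially a substitution of $q=0$, $p=t$ into known formulas.

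First, I would recall the computation carried out at the beginning of the proof of Theorem \ref{thm:hVH}: after inserting $h_m = e^{-x^2/2}H_m$ and $h_n = e^{-x^2/2}H_n$ into \eqref{Exp:Vs}, one obtains
\begin{align*}
\Vs(h_{m},h_{n})(p,q)=\dfrac{e^{-p^{2}/4}}{\sqrt{2\pi}}\int_{-\infty}^{+\infty} e^{-y^{2}+iyq}\, H_{m}\!\left(y+\frac{p}{2}\right)H_{n}\!\left(y-\frac{p}{2}\right)dy.
\end{align*}
Specializing $p=t$ and $q=0$ gives
\begin{align*}
\Vs(h_{m},h_{n})(t,0)=\dfrac{e^{-t^{2}/4}}{\sqrt{2\pi}}\int_{-\infty}^{+\infty} e^{-y^{2}} H_{m}\!\left(y+\frac{t}{2}\right)H_{n}\!\left(y-\frac{t}{2}\right)dy.
\end{align*}

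Next, I would evaluate the same quantity via \eqref{hVH} of Theorem \ref{thm:hVH}. Setting $p=t$, $q=0$ there yields
\begin{align*}
\Vs(h_{m},h_{n})(t,0) = (-1)^{n}\sqrt{2}^{\,m+n-1}\, e^{-t^{2}/4}\, H_{m,n}\!\left(\frac{t}{\sqrt{2}},\frac{t}{\sqrt{2}}\right),
\end{align*}
since $(p+iq)/\sqrt{2}=(p-iq)/\sqrt{2}=t/\sqrt{2}$ when $q=0$. Equating the two expressions, the common factor $e^{-t^{2}/4}$ cancels and rearrangement gives
\begin{align*}
\int_{-\infty}^{+\infty} H_{m}\!\left(y+\frac{t}{2}\right) H_{n}\!\left(y-\frac{t}{2}\right) e^{-y^2}\, dy = (-1)^n\sqrt{2\pi}\,\sqrt{2}^{\,m+n-1}\, H_{m,n}\!\left(\frac{t}{\sqrt{2}},\frac{t}{\sqrt{2}}\right),
\end{align*}
which is \eqref{inthnhm} after noticing that $\sqrt{2\pi}\cdot\sqrt{2}^{\,m+n-1}=\sqrt{\pi}\,\sqrt{2}^{\,m+n}$.

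There is no genuine obstacle here: the derivation is a direct specialization, and the only care needed is bookkeeping of the factor $\sqrt{2}$ arising from $\sqrt{2\pi}/\sqrt{2}=\sqrt{\pi}$ when combining the normalizing constants. The corollary therefore follows immediately from Theorem \ref{thm:hVH} and the definition \eqref{Exp:Vs} of $\Vs$.
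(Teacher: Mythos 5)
Your proof is correct and follows exactly the route the paper intends: the corollary is obtained by specializing Theorem \ref{thm:hVH} to $p=t$, $q=0$ and comparing with the integral form of $\Vs(h_m,h_n)$ already displayed in the theorem's proof. The constant bookkeeping $\sqrt{2\pi}\,\sqrt{2}^{\,m+n-1}=\sqrt{\pi}\,\sqrt{2}^{\,m+n}$ is also right.
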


\begin{remark}
By taking $t=0$ in \eqref{fhnhm} and \eqref{inthnhm}, we recover the well-known formula
\begin{align*}
\int_{-\infty}^{+\infty} H_{m}\left(y\right) H_{n}\left(y\right) e^{-y^2} dy = \sqrt{\pi} 2^{m} m! \delta_{m,n}
\end{align*}
since $H_{m,n}\left(0,0\right)= (-1)^m m! \delta_{m,n}$.
\end{remark}

As an immediate consequence of Theorem \ref{thm:hVH}, combined with the fact that $\Vs$
transforms orthogonal basis of $L^{2}(\R)$
to an orthogonal basis of $L^{2}(\C)$, we recover the following well-known result (see \cite{IntInt06,Gh08,Gh13ITSF,Ismail13a,Ismail13b}).

\begin{corollary}
The complex Hermite polynomials $H_{m,n}$ constitute an orthogonal
basis of the Hilbert space $L^{2}(\C,e^{-|z|^2}dxdy)$; $z=x+iy$.
\end{corollary}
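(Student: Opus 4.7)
The proof is essentially a transfer argument: combine Theorem \ref{thm:hVH} with the general basis-preserving property of $\V$ recalled at the end of Section \ref{s2}, and then perform a change of variables to pass from Lebesgue measure on $\C$ to the Gaussian measure $e^{-|z|^2}dx\,dy$.

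\textbf{Step 1: Starting basis on the real line.} Recall from Section \ref{s3} that the Hermite functions $\{h_n\}_{n\in\N}$ form an orthogonal basis of $\ldr$. After normalizing to $\tilde{h}_n := h_n / \|h_n\|_{\ldr}$, we obtain an orthonormal basis of $\ldr$.

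\textbf{Step 2: Orthogonal basis on $\C$.} By the consequence of Moyal's formula stated at the end of Section \ref{s2} (in dimension $d=1$), the family $\{\V(\tilde{h}_m,\tilde{h}_n)\}_{m,n\in\N}$ is an orthonormal basis of $L^2(\C)$ (with Lebesgue measure $dp\,dq$). Equivalently, $\{\V(h_m,h_n)\}_{m,n\in\N}$ is an orthogonal basis of $L^2(\C)$.

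\textbf{Step 3: Identifying the basis with complex Hermite polynomials.} By Theorem \ref{thm:hVH}, setting $z=(p+iq)/\sqrt{2}$, we have
\begin{align*}
\V(h_m,h_n)(p,q) = (-1)^n\sqrt{2}^{m+n-1}\, e^{-|z|^2/2}\, H_{m,n}(z,\bar z),
\end{align*}
since $|z|^2=(p^2+q^2)/2$. Thus each element of the $L^2(\C)$-basis is, up to a nonzero scalar, of the form $e^{-|z|^2/2}H_{m,n}(z,\bar z)$ expressed in the variables $(p,q)$.

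\textbf{Step 4: Transfer via change of variables.} The linear change of variables $(p,q)\mapsto (x,y)=(p/\sqrt{2},q/\sqrt{2})$, i.e.\ $z=x+iy=(p+iq)/\sqrt{2}$, has Jacobian $dp\,dq = 2\,dx\,dy$. Consequently, the map
\begin{align*}
U:L^2(\C,e^{-|z|^2}dx\,dy)\longrightarrow L^2(\C,dp\,dq),\qquad (UF)(p,q) := \sqrt{2}\cdot e^{-|z|^2/2}F(z,\bar z)
\end{align*}
is an isometric isomorphism, because
\begin{align*}
\int_{\C}|UF(p,q)|^2\,dp\,dq = 2\int_{\C}e^{-|z|^2}|F(z,\bar z)|^2\,dp\,dq = \int_{\C}e^{-|z|^2}|F|^2\,dx\,dy.
\end{align*}
By Step 3, $U$ sends each $H_{m,n}$ (up to a nonzero multiplicative constant) to $\V(h_m,h_n)$. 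Since the latter form an orthogonal basis of $L^2(\C,dp\,dq)$ and $U$ is unitary, pulling back through $U^{-1}$ shows that $\{H_{m,n}\}_{m,n\in\N}$ is an orthogonal basis of $L^2(\C,e^{-|z|^2}dx\,dy)$.

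The only subtle point is the bookkeeping in Step 4: one must verify that $U$ is indeed unitary (with the factor $\sqrt{2}$ coming from the Jacobian), and that no $H_{m,n}$ is annihilated by this correspondence. Both are routine and cause no real obstacle.
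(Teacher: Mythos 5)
Your proposal is correct and follows exactly the route the paper intends: the paper states the corollary as an immediate consequence of Theorem \ref{thm:hVH} together with the basis-preserving property of $\V$ from Section 2, which is precisely the transfer argument you spell out in detail. The only flaw is the normalization constant in Step 4: since $dp\,dq = 2\,dx\,dy$, one has $\int h\,dp\,dq = 2\int h\,dx\,dy$, so your chain of equalities applies the Jacobian in the wrong direction and the unitary $U$ should carry the factor $1/\sqrt{2}$ rather than $\sqrt{2}$; this is immaterial to the conclusion, as any nonzero scalar multiple of a unitary map still carries orthogonal bases to orthogonal bases.
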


In the sequel, we will investigate further consequences of Theorem \ref{thm:hVH}. We begin, by establishing a new generating
function for the complex Hermite polynomials $H_{m,n}$ with $m=n$. Namely, we assert

\begin{theorem}
For every  positive real number $0<\lambda<1$, we have
\begin{align}\label{GenMehleH}
\sum_{m=0}^{+\infty}  \frac{  \lambda ^{m}}{m!} H_{m,m}\left( z , \overline{z} \right)
  = \frac{ e^{\frac{\lambda }{1+\lambda} |z|^2 } }{1+\lambda} .
  \end{align}
\end{theorem}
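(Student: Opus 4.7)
The plan is to combine Theorem \ref{thm:hVH} (taken in the diagonal case $m=n$) with Mehler's formula \eqref{Mehler}--\eqref{Mehlerfctg} and then evaluate a Gaussian integral. Setting $m=n$ in \eqref{hVH} and letting $z=(p+iq)/\sqrt{2}$ (so that $|z|^2=(p^2+q^2)/2$) expresses each $H_{m,m}(z,\bar z)$ as a constant multiple of $\V(h_m,h_m)(p,q)$; more precisely
\begin{equation*}
H_{m,m}(z,\bar z)=(-1)^{m}\,\sqrt{2}\,\frac{e^{|z|^{2}/2}}{2^{m}}\,\V(h_{m},h_{m})(p,q).
\end{equation*}
Therefore the claimed sum equals $\sqrt{2}\,e^{|z|^{2}/2}$ times $\sum_{m\ge 0}(-\lambda/2)^{m}/m!\cdot \V(h_{m},h_{m})(p,q)$.

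Next I would plug in the explicit expression \eqref{Exp:Vs} for $\V$ and exchange sum and integral (justified for $0<\lambda<1$ by the fast decay in Mehler's formula), which turns the inner sum into $\sum_{m}\lambda^{m}(-1)^{m}/(2^{m}m!)\,h_{m}(y+p/2)\,h_{m}(y-p/2)$. This is exactly the left-hand side of \eqref{Mehler} evaluated at parameter $-\lambda$ and arguments $x=y+p/2$, $y'=y-p/2$, so by \eqref{Mehlerfctg} it equals $g(y+p/2,y-p/2\,|\,-\lambda)$. Using the identities $(y+p/2)^{2}+(y-p/2)^{2}=2y^{2}+p^{2}/2$ and $(y+p/2)(y-p/2)=y^{2}-p^{2}/4$, the exponent collapses to
\begin{equation*}
-\frac{1+\lambda}{1-\lambda}\,y^{2}-\frac{1-\lambda}{4(1+\lambda)}\,p^{2}.
\end{equation*}

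Then I would finish by the Gauss integral \eqref{Gauss} with $\alpha=(1+\lambda)/(1-\lambda)$ and $\beta=iq$, which produces a factor $\sqrt{\pi(1-\lambda)/(1+\lambda)}\,\exp\!\bigl(-\frac{1-\lambda}{4(1+\lambda)}q^{2}\bigr)$. Multiplying all constants, the prefactors collapse cleanly: $\tfrac{1}{\sqrt{2\pi}}\sqrt{\pi(1-\lambda)/(1+\lambda)}\cdot\sqrt{2}\,e^{|z|^{2}/2}=\tfrac{1}{1+\lambda}\,e^{|z|^{2}/2}$ (after using the factor $1/\sqrt{1-\lambda^{2}}$ from $g$), and the two $p$- and $q$-exponentials combine via $p^{2}+q^{2}=2|z|^{2}$ into $\exp\!\bigl(-\tfrac{1-\lambda}{2(1+\lambda)}|z|^{2}\bigr)$. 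Combining with $e^{|z|^{2}/2}$ yields the exponent $\tfrac{(1+\lambda)-(1-\lambda)}{2(1+\lambda)}|z|^{2}=\tfrac{\lambda}{1+\lambda}|z|^{2}$, giving exactly \eqref{GenMehleH}.

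The main obstacle is purely computational: tracking the several $\sqrt{2}$ and $(1\pm\lambda)$ factors that appear after invoking Mehler and Gauss. Conceptually nothing hard happens: Theorem \ref{thm:hVH} turns the Hermite generating series into a Mehler-type series whose kernel is Gaussian, and the remaining integral is evaluated in closed form. The convergence hypothesis $0<\lambda<1$ is exactly what is needed to apply \eqref{Mehler} (with parameter $-\lambda$) and to legitimize the Fubini-type interchange of sum and integral.
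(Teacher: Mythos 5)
Your proof is correct and follows essentially the same route as the paper: specialize Theorem \ref{thm:hVH} to the diagonal case, recognize Mehler's kernel at parameter $-\lambda$ inside the integral defining $\V(h_m,h_m)$, and finish with the Gauss integral \eqref{Gauss}. The only (immaterial) difference is that you keep the arguments $(p,q)$ of $\V$ fixed and set $z=(p+iq)/\sqrt{2}$, whereas the paper sets $z=p+iq$ and evaluates $\V$ at $(\sqrt{2}p,\sqrt{2}q)$; all your constants and exponents check out.
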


\begin{proof}
Starting from \eqref{hVH}, we can write
\begin{align}\label{hVH1}
 H_{m,m}\left( z, \bz \right) %= H_{m,m}\left( p+iq , p-iq \right)
 =
  (-1)^{m}\frac{\sqrt{2}}{2^{m}} e^{\frac{|z|^{2}}{2}} \Vs(h_{m},h_{m})(\sqrt{2}p,\sqrt{2}q)
\end{align}
with $z=p+iq$; $p,q\in\R$. Therefore, we get
\begin{align*}%\label{hVH2}
\sum_{m=0}^{+\infty}  \frac{ \lambda^{m}}{m!} H_{m,m}\left( z,\bz \right)
 &=  \sqrt{2} e^{\frac{|z|^{2}}{2}} \sum_{m=0}^{+\infty}   \frac{(-\lambda)^{m}}{ 2^{m} m!}   \Vs(h_{m},h_{m})(\sqrt{2}p,\sqrt{2}q) \\
  &\stackrel{\eqref{Exp:Vs}}{=} \frac{e^{\frac{|z|^{2}}{2}}}{\sqrt{\pi}}
   \sum_{m=0}^{+\infty}   \frac{(-\lambda)^{m}}{ 2^{m} m!}
   \int_\R e^{i\sqrt{2}q y} h_{m}\left(y+\frac{p}{\sqrt{2}}\right) h_{m}\left(y-\frac{p}{\sqrt{2}}\right)  dy\\
   &= \frac{e^{\frac{|z|^{2}}{2}}}{\sqrt{\pi}}
    \int_\R e^{i\sqrt{2}q y} \left( \sum_{m=0}^{+\infty}   \frac{(-\lambda)^{m}}{ 2^{m} m!}
    h_{m}\left(y+\frac{p}{\sqrt{2}}\right) h_{m}\left(y-\frac{p}{\sqrt{2}}\right) \right) dy.
\end{align*}
In the last equality we recognize the Mehler's formula \eqref{Mehler} for the Hermite functions. Whence, it follows
\begin{align}\label{hVH2}
\sum_{m=0}^{+\infty}  \frac{ \lambda^{m}}{m!} H_{m,m}\left( z , \bz \right)
= \frac{e^{\frac{|z|^{2}}{2}}}{\sqrt{\pi}}  \int_\R  e^{i\sqrt{2}q y}
      g\left(y+\frac{p}{\sqrt{2}},y-\frac{p}{\sqrt{2}} \big| -\lambda\right)  dy.
\end{align}
Now, since
\begin{align*}
   g\left(y+\frac{p}{\sqrt{2}},y-\frac{p}{\sqrt{2}} \big| -\lambda\right)
    = \frac{1}{\sqrt{1-\lambda^2}} e^{ - \frac{1+ \lambda }{1-\lambda} y^2 - \frac{1-\lambda }{2(1+\lambda)} p^2 }.
\end{align*}
Equation \eqref{hVH2} becomes
\begin{align*}
\sum_{m=0}^{+\infty}  \frac{ \lambda^{m}}{m!} H_{m,m}\left( z , \bz \right)
 = \frac{e^{\frac{|z|^{2}}{2} - \frac{1-\lambda }{1+\lambda} \frac{p^2}{2}}}{\sqrt{\pi}\sqrt{1-\lambda^2}}
 \int_\R e^{- \frac{1+\lambda}{1-\lambda}y^2 + i\sqrt{2}q y} dy.
\end{align*}
Making appeal to \eqref{Gauss} with  $\alpha=\frac{1+\lambda}{1-\lambda}>0$, for $0<\lambda<1$, and $\beta=i\sqrt{2}q$, we get \eqref{GenMehleH}.
\end{proof}

\section{Concluding remarks} \label{s3}

Instead of $h_n$, we consider the orthonormal basis of $L^2(\C)$ given by
$$e_n(x) = \frac{h_n(x)}{\norm{h_n}} =  \frac{h_n(x)}{\sqrt{2^n n! \sqrt{\pi}}}.$$
Thus according to \eqref{HmnLaguerre}, giving the expression of the complex Hermite polynomials in terms of the
 generalized Laguerre polynomials $L^{(\alpha)}_n(x)$, we can rewrite the result of Theorem \ref{thm:hVH} as
 \begin{align*}
 \Vs(e_{m},e_{n})(p,q)
   =  \frac{1}{\sqrt{2\pi}} e^{-\frac{|z|^{2}}{4}}
  \left\{\begin{array}{lll}
     \dfrac{\sqrt{n!}}{\sqrt{m!}} \dfrac{z^{m-n}}{\sqrt{2}^{m-n} } L^{(m-n)}_{n}\left(\dfrac{|z|^2}{2}\right)   & \quad \mbox{if } m\geq n\\
     \quad \\
     (-1)^{n+m}\dfrac{\sqrt{m!}}{\sqrt{n!}} \dfrac{{\overline{z}}^{n-m}}{\sqrt{2}^{n-m} }   L^{(n-m)}_{m}\left(\dfrac{|z|^2}{2}\right) & \quad \mbox{if }  n\geq m
   \end{array} \right. ,
   \end{align*}
where $p+iq=z$. This reads equivalently as
  \begin{align}
   & \Vs(e_{j+k},e_{j})(p,q) =    \left(\frac{j!}{2\pi {2}^{k} (j+k)!}\right)^{1/2}
                  z^{k}   L^{(k)}_{j}\left(\frac{|z|^2}{2}\right) e^{-\frac{|z|^{2}}{4}}    \label{hVL1a}\\
   &\Vs(e_{j},e_{j+k})(p,q) =   (-1)^{k} \left(\frac{j!}{2\pi {2}^{k} (j+k)!}\right)^{1/2}
     {\overline{z}}^{k} L^{(k)}_{j}\left(\frac{|z|^2}{2}\right) e^{-\frac{|z|^{2}}{4}} .  \label{hVL2b}
  \end{align}
Whence, we recover the result established by Wong in \cite{Wong} giving the explicit expression of $\V(e_n,e_m)$ in terms of the Laguerre polynomials ($i\bz$ in Wong's notation is $z$ in ours). However, our proof reduces considerably the one given by Wong.

We conclude this paper by noting that by adopting the same approach as above, one can introduce a new class of orthogonal polynomials on the quaternion $\R^4=\mathbb{H}$. This is the subject of another paper.

\end{document}